\newtheorem{theorem}{Theorem}
\newtheorem{lemma}{Lemma}
\newcommand{\leqnomode}{\tagsleft@true}
\newcommand{\reqnomode}{\tagsleft@false}
\def\({\begin{eqnarray}}
\def\){\end{eqnarray}}
\def\[{\begin{eqnarray*}}
\def\]{\end{eqnarray*}}
\def\part#1#2{\frac{\partial #1}{\partial #2}}
\def\R{\mathbb{R}}
\def\d{\mathrm{d}}
\def\tot#1#2{\frac{\d #1}{\d #2}}
\def\eps{\varepsilon}
\def\M{\mathbb{M}}
\begin{document}

\title{Exponential decay for negative feedback loop with distributed delay}   
\author{Jan Haskovec\footnote{Computer, Electrical and Mathematical Sciences \& Engineering, King Abdullah University of Science and Technology, 23955 Thuwal, KSA.
jan.haskovec@kaust.edu.sa}}

\date{}

\maketitle

\begin{abstract}
We derive sufficient conditions for exponential decay of solutions
of the {delay negative feedback equation} with distributed delay.
The conditions are written in terms of exponential moments of the distribution.
Our method only uses elementary tools of calculus and is robust towards
possible extensions to more complex settings, in particular,
systems of delay differential equations.
We illustrate the applicability of the method to particular distributions - 
Dirac delta, Gamma distribution, uniform and truncated normal distributions.
\end{abstract}
\vspace{2mm}

\textbf{Keywords}: Negative feedback loop, distributed delay, exponential decay.
\vspace{2mm}

\textbf{2010 MR Subject Classification}: 34K06, 34K25, 34K11.
\vspace{2mm}

\section{Introduction and main result}\label{sec:Intro}
In this paper we derive sufficient conditions for exponential decay of solutions
of the {delay negative feedback equation} with distributed delay,
\( \label{eq:main}
   \dot u(t) = - F_P[u](t) := - \int_0^{\infty} u(t-s) \d {P}(s) \qquad\mbox{for } t>0,
\)
where $P$ is a probability measure on $[0,\infty)$.
Note that the normalization $\int_0^\infty \d P=1$ can be imposed
by an eventual rescaling of the time variable.
For simplicity, we consider \eqref{eq:main} subject to the constant initial
datum $u(s) \equiv 1$ for $s\leq 0$; alternatively, we may assume that
\eqref{eq:main} holds globally, i.e., for all $t\in\R$.

The importance of equation \eqref{eq:main}, also called a linear retarded functional differential equation,
stems from the fact that it can be seen as a linearization of many nonlinear models in biology and physics involving delay.
As such, it has been a long-standing subject of interest of the mathematical community.
Basic theory for delay differential equations and functional differential equations can be
found in, e.g., \cite{Bellman} and \cite{Hale}, while \cite{Erneux} and \cite{Smith} focus on applications.
The theory typically focuses on two qualitative aspects of delay/functional differential equations -
(asymptotic) stability of the steady state solutions \cite{Bernard-Crauste, Krizstin},
and oscillatory behavior \cite{Agarwal1, Agarwal2, Berezansky, Ladas-Ladde}.
This note aims to contribute to the study of the latter aspect
by deriving sufficient conditions for the solution of \eqref{eq:main} 
to decay monotonically (exponentially) to zero.
In contrast to the traditional approach, based on studying the characteristic
equation, our method only uses elementary tools of calculus.
It provides relatively simple sufficient conditions for exponential decay of the solution,
written in terms of the exponential moments of the distribution $P$.
Due to its simplicity, it can be applied to systems of delay differential equations,
where the analysis of the characteristic equation would be prohibitively complex;
see \cite{HasMar} for a recent application.
Let us note that in the case when $P$ is a Dirac measure,
a slight modification of the method leads to an optimal (i.e., equivalent) condition
for monotone decay of the solution.

In the sequel we shall denote, for $\mu>0$, the exponential moment of $p$ by
\(   \label{M_p}
   \M_P(\mu) := \int_0^\infty e^{\mu s} \d P(s).
\)

\begin{theorem}\label{thm:main}
If there exists some $\mu>1$ such that
\(  \label{ass:mu1}
    \M_P(2\mu) \leq \mu^2
\)
and
\(  \label{ass:mu2}
   {\M_P(\mu)} (\M_P(\mu) - 1) < \mu,
\)
then the solution $u=u(t)$ of \eqref{eq:main} 
converges monotonically exponentially to zero as $t\to\infty$
with rate at least
\[
   2 \left( \frac{\M_P(\mu)\bigl(\M_P(\mu) - 1 \bigr)}{\mu} - 1 \right) .
\]
\end{theorem}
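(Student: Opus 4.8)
The plan is to turn exponential decay into a statement about a squared energy and to obtain monotonicity separately, as positivity of the solution. The first step is to remove the local part of the feedback: writing $u(t-s) = u(t) - \int_{t-s}^t \dot u(\tau)\,\d\tau$ and using $\int_0^\infty \d P = 1$ recasts \eqref{eq:main} as
\[
   \dot u(t) + u(t) &=& \int_0^\infty \int_{t-s}^t \dot u(\tau)\,\d\tau \, \d P(s) .
\]
The homogeneous part $\dot u + u = 0$ already decays, and the right-hand side is a remainder supported on the delay window and governed by $\dot u$. Since the claimed rate carries the factor $2$, I would work with $u^2$, for which the reformulation gives the clean identity
\[
   \tot{}{t} u(t)^2 &=& -2\, u(t)^2 + 2\, u(t) \int_0^\infty \int_{t-s}^t \dot u(\tau)\,\d\tau\,\d P(s) ,
\]
so that the stabilising baseline $-2\,u^2$ is retained exactly and the whole question becomes one of absorbing the last term.

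The core is to estimate the remainder so that it costs at most $\tfrac{2\,\M_P(\mu)(\M_P(\mu)-1)}{\mu}\,u^2$ up to terms that can be absorbed by a Lyapunov correction. Two ingredients enter. First, $\dot u$ itself is bounded through \eqref{eq:main} by $\int_0^\infty u(\cdot - r)\,\d P(r)$, which under an $e^{\mu\cdot}$ weighting contributes a factor $\M_P(\mu)$, while the length of the window $\int_{t-s}^t$ against the same weight contributes the factor $\M_P(\mu)-1$ coming from $\int_0^\infty (e^{\mu s}-1)\,\d P(s)$ and the $1/\mu$ from integrating $\int_{t-s}^t e^{\mu\tau}\,\d\tau$; together these produce exactly the coefficient in assumption \eqref{ass:mu2} and the stated exponent $2\bigl(\M_P(\mu)(\M_P(\mu)-1)/\mu - 1\bigr)$. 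Second, to make the window estimate rigorous I would split $\int_{t-s}^t \dot u = \int_{t-s}^t e^{-\mu\tau}\, e^{\mu\tau}\dot u$ and apply the Cauchy--Schwarz inequality in $\tau$ and then in $s$ against $\d P$; squaring the weight $e^{\mu s}$ produces the second exponential moment, and the resulting factor $(\M_P(2\mu)-1)/(2\mu)$ is controlled precisely when $\M_P(2\mu)\le\mu^2$, which is assumption \eqref{ass:mu1}. A Lyapunov functional of the form $\L(t)=u(t)^2+\kappa\int_0^\infty\int_{t-s}^t e^{2\mu\tau}\dot u(\tau)^2\,\d\tau\,\d P(s)$, with $\kappa>0$ chosen to cancel the weighted $\dot u^2$ history, then yields $\tot{}{t}\L \le 2\bigl(\M_P(\mu)(\M_P(\mu)-1)/\mu - 1\bigr)\L$ and hence the exponential bound.

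It remains to upgrade decay to monotone decay, i.e.\ to exclude oscillation. Here I would use that $\dot u$ solves the same equation \eqref{eq:main} (differentiate in $t$), so the decay estimate applies to $\dot u$ as well, and that monotonicity is equivalent to positivity: as long as $u>0$ one has $\dot u(t)=-\int_0^\infty u(t-s)\,\d P(s)<0$. Starting from $u\equiv1$, $\dot u(0^+)=-1$, I would propagate $u>0$ by contradiction at a hypothetical first zero, using the quantitative control of both $u$ and $\dot u$ at the rate above to show that a sign change is incompatible with \eqref{ass:mu1}--\eqref{ass:mu2}.

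The main obstacle is the closure of the energy estimate: one must choose the history weight and the constant $\kappa$ so that the weighted $\dot u^2$ terms cancel and so that the two moment conditions emerge in exactly the forms \eqref{ass:mu1} and \eqref{ass:mu2}, controlling the remainder without ever discarding the baseline $-2\,u^2$. The second delicate point is the monotonicity step, since generic solutions of \eqref{eq:main} do oscillate; ruling this out relies on the strength of the decay bound rather than on decay alone, and is the qualitatively non-trivial part of the argument.
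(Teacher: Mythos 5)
Your starting identity is the same as the paper's: writing $u - F_P[u] = \int_0^\infty\int_{t-s}^t \dot u(\tau)\,\d\tau\,\d P(s)$ and $\dot y = -uF_P[u] = (u-F_P[u])u - u^2$ (with $y=u^2/2$) is exactly how the paper's Lemma \ref{lem:decay} begins. The genuine gap is in how you estimate the remainder. To extract the factor $\M_P(\mu)$ from ``$\dot u$ is bounded through \eqref{eq:main} under an $e^{\mu\cdot}$ weighting'' you need the pointwise comparison $|u(\sigma-\theta)| \le e^{\mu\theta}|u(\sigma)|$ for all $\sigma$ and $\theta>0$. This is not a consequence of the equation or of any weighting trick; it is an a priori bound on how fast the solution can vary, and it is precisely the content of the paper's Lemma \ref{lem:delayEst}, namely $e^{-2\mu s}y(t) < y(t-s) < e^{2\mu s}y(t)$. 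The paper proves it by a continuation/bootstrap argument: the ratio $|\dot y|/y$ starts at $2 < 2\mu$, and if it first reached $2\mu$ at a finite time $T$, then integrating the strict bound backwards from $T$ and applying Young plus Jensen together with assumption \eqref{ass:mu1} yields $|\dot y(T)| < 2\mu y(T)$, a contradiction. That is where \eqref{ass:mu1} actually enters, and it is the missing idea in your proposal: your Cauchy--Schwarz/Lyapunov-history device produces weighted $L^2$ quantities of $\dot u$, not a pointwise comparison of $u$ at different times, and without the latter the exact coefficient $\M_P(\mu)(\M_P(\mu)-1)/\mu$ of \eqref{ass:mu2} and the stated rate cannot be recovered. (Whether your functional $\L$ closes at all with that rate is also unverified, but that is secondary.)

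The second, independent gap is monotonicity. Any Lyapunov-functional argument yields an upper bound on $u^2$, and exponential decay upper bounds cannot exclude sign changes: for the constant-delay equation \eqref{eq:constdelay} with $\tau\in(e^{-1},\pi/2)$ every nontrivial solution oscillates \emph{and} decays exponentially. Correspondingly, your ``contradiction at a hypothetical first zero'' has no teeth: at a first zero $t_0$ one has $u(t_0)=0$ and $\dot u(t_0) = -\int_0^\infty u(t_0-s)\,\d P(s) < 0$, the solution simply crosses, and nothing is violated locally, nor is any decay upper bound contradicted. What excludes zeros is a \emph{lower} bound on $y$: Lemma \ref{lem:delayEst} gives $y(t) > e^{-2\mu t}y(0) > 0$, so $u$ never vanishes, hence stays positive, and then $\dot u = -F_P[u] < 0$ gives monotone decay. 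So the single lemma you are missing carries both halves of the theorem; without it neither the claimed rate nor the monotonicity can be reached by the route you describe.
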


Let us note that in the standard theory \eqref{eq:main} is called \emph{nonoscillatory}
if there exists an initial datum $u_0$ such that the solution of the initial value problem 
is eventually positive or eventually negative (see Definition 1.1 in \cite{Agarwal1}).
Therefore, Theorem \ref{thm:main} provides sufficient conditions for \eqref{eq:main} to be nonoscillatory.
Let us again point out the relative simplicity of the conditions \eqref{ass:mu1}, \eqref{ass:mu2}, being
only written in terms of the exponential moments of the distribution $P$.

The proof of Theorem \ref{thm:main}
is based on suitable decay estimates for the quantity $y(t):=u^2(t)/2$ and is carried out in Section \ref{sec:Proof}.
In Section \ref{sec:ex} we show the applicability of the result to particular choices of the measure $P$.
First, we consider the Dirac measure concentrated at $\tau>0$, $P(s) = \delta(s - \tau)$,
which turns \eqref{eq:main} into the simple negative feedback equation with constant delay $\tau>0$,
\(   \label{eq:constdelay}
   \dot u(t) = - u(t-\tau)  \qquad\mbox{for } t>0.
\)
We shall show that the conditions \eqref{ass:mu1} and \eqref{ass:mu2} are satisfied if
\[
   \tau < \ln\sqrt{2}.
\]
Moreover, we shall show that by a slight modification of the proof of Theorem \ref{thm:main}
we obtain monotone decay of the solution as soon as $\tau \leq e^{-1}$.
This result is sharp since it is known that for $\tau > e^{-1}$ the nontrivial solutions of \eqref{eq:constdelay}
must oscillate \cite{Gyori-Ladas}.
The second example is the Gamma distribution $\d P(s) = \lambda^k s^{k-1} e^{-\lambda s}/\Gamma(k)$
with shape parameter $k >0$ and rate parameter $\lambda>0$.
Here we derive explicit sufficient conditions for satisfiability of \eqref{ass:mu1}, \eqref{ass:mu2}.
In the special case of $k=1$, which corresponds to the exponential distribution,
we show that the solution is nonoscillatory if $\lambda \geq 3^\frac{3}{2} \approx 5.196$.
The optimal condition for nonoscillation is $\lambda \geq 4$, see \cite{Berezansky-Braverman}.
Finally, for the uniform and truncated normal distributions we resolve the conditions
\eqref{ass:mu1}, \eqref{ass:mu2} numerically.

\section{Proof of the main result}\label{sec:Proof}

In this section we assume that $u=u(t)$ is a solution of \eqref{eq:main} 
subject to the constant initial datum, and we introduce the notation
\(
   y(t) &:=& u^2(t)/2 \qquad\mbox{for } t\geq 0,\\
       &:=& u_0^2(t)/2 \qquad\mbox{for } t< 0.     \nonumber
\)

\begin{lemma} \label{lem:delayEst}
If assumption \eqref{ass:mu1} is verified for some $\mu>1$,
then for all $t>0$ and $s>0$,
\(  \label{fb}
   e^{-2\mu s} y(t) < y(t-s) < e^{2\mu s} y(t).
 \)
\end{lemma}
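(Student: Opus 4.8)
The plan is to control the ratio $y(t-s)/y(t)$ via a differential inequality in the shift variable $s$. Since $y = u^2/2$, we have $y'(t) = u(t)\dot u(t) = -u(t) \int_0^\infty u(t-\sigma)\,\d P(\sigma)$, so the growth rate of $\ln y$ is governed by the ratios $u(t-\sigma)/u(t)$, which are in turn controlled by the same kind of bound we are trying to prove. This suggests setting up a bootstrap/Gronwall argument: I would fix $t>0$ and study $g(s) := y(t-s)$ as a function of the backward shift $s\ge 0$, aiming to show $|\tot{}{s}\ln g(s)| < 2\mu$, which integrates immediately to \eqref{fb}.

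\medskip

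First I would differentiate. For $s$ in the range where $t-s>0$ we get $\tot{}{s} y(t-s) = -\dot u(t-s)\,u(t-s) = u(t-s)\int_0^\infty u(t-s-\sigma)\,\d P(\sigma)$. By Cauchy--Schwarz applied to each term, $|u(t-s)\,u(t-s-\sigma)| \le \tfrac12 u^2(t-s) + \tfrac12 u^2(t-s-\sigma) = y(t-s) + y(t-s-\sigma)$, so
\(
   \left| \tot{}{s} y(t-s) \right| \le y(t-s) + \int_0^\infty y(t-s-\sigma)\,\d P(\sigma).
\)
The key step is now to close this using assumption \eqref{ass:mu1}. If I run a continuity/bootstrap argument assuming $y(t-s-\sigma) \le e^{2\mu(s+\sigma)} y(t)$ holds up to the current $s$, then the integral is bounded by $e^{2\mu s} y(t)\int_0^\infty e^{2\mu\sigma}\,\d P(\sigma) = e^{2\mu s}y(t)\,\M_P(2\mu)$. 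Using $\M_P(2\mu)\le \mu^2$ and $y(t-s)\le e^{2\mu s}y(t)$, the right-hand side is at most $e^{2\mu s} y(t)(1 + \mu^2) \le e^{2\mu s}y(t)\cdot 2\mu$, where the final inequality is $1+\mu^2 \ge 2\mu$ reversed — so I must instead arrange the comparison so that the relevant bound is $\le 2\mu\, e^{2\mu s} y(t)$, which forces me to use $\M_P(2\mu)\le\mu^2$ together with the elementary estimate to produce exactly the exponent $2\mu$.

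\medskip

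The cleanest formalization is therefore a comparison argument: define the candidate bound $\phi(s) := e^{2\mu s} y(t)$ and show that $\tot{}{s}\left[ y(t-s)/\phi(s)\right]$ cannot push the ratio above $1$, and symmetrically that $\psi(s):= e^{-2\mu s} y(t)$ is a valid lower barrier; the initial condition at $s=0$ is equality, and strict inequality for $s>0$ follows because the differential inequality is strict under \eqref{ass:mu1} (note $\mu>1$ makes the moment bound nontrivial). I expect the main obstacle to be the \emph{self-referential} nature of the estimate: the bound on $\tot{}{s}y(t-s)$ involves $y$ at shifts that are themselves only controlled by the bound being proved. The honest way to handle this is a continuity argument on the set $\{s : e^{-2\mu\sigma}y(t) < y(t-\sigma) < e^{2\mu\sigma}y(t) \text{ for all } 0<\sigma\le s\}$, showing it is open, closed, and nonempty in $(0,\infty)$; the constant-initial-datum assumption ($u_0\equiv 1$, so $y\equiv 1/2$ for negative arguments) is what anchors the estimate past $t-s<0$ and keeps the integral $\int_0^\infty$ well-defined throughout.
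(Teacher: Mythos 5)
There are two genuine gaps, one of which you flag yourself but never close, and one you do not notice. First, the quantitative one: your symmetric splitting $|u(t-s)\,u(t-s-\sigma)| \le \tfrac12 u^2(t-s)+\tfrac12 u^2(t-s-\sigma)$ leads, after the bootstrap bound, to the factor $1+\M_P(2\mu)\le 1+\mu^2$, and $1+\mu^2>2\mu$ for every $\mu>1$, so the barrier comparison can never close --- you acknowledge this, but then merely assert that the comparison "must be arranged" to produce $2\mu$, without supplying the mechanism. The fix is the whole point of the lemma and of the form of assumption \eqref{ass:mu1}: use the \emph{weighted} Young inequality $|u F_P[u]|\le \tfrac{\eps}{2}u^2+\tfrac{1}{2\eps}(F_P[u])^2$ together with Jensen, $(F_P[u])^2\le F_P[u^2]$, which gives the factor $\eps+\M_P(2\mu)/\eps$; optimizing at $\eps=\sqrt{\M_P(2\mu)}$ yields $2\sqrt{\M_P(2\mu)}\le 2\mu$ exactly under \eqref{ass:mu1}. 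Without this optimized split your differential inequality is simply false in the regime you need it.

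Second, and more structurally, your bootstrap runs in the wrong variable. You fix $t$ and continue in the backward shift $s$, but the derivative $\tot{}{s}y(t-s)$ involves $y(t-s-\sigma)$ for \emph{all} $\sigma>0$, i.e., shifts $s+\sigma$ strictly beyond the current frontier of your bootstrap set; the hypothesis "the bounds hold for all $0<\sigma\le s$" therefore never controls the terms you need to extend the set, and the set is not obviously open. The constant initial datum only rescues the shifts that land in negative time (where $y\equiv 1/2$), not those landing in $(0,\,t-s)$, so the argument is circular precisely where you claim it is "anchored". The paper's proof resolves this by running the continuity argument \emph{forward in time} on the statement $|\dot y(t)|<2\mu\,y(t)$: this holds at $t=0^+$ (the constant datum gives $|\dot y(0^+)|/y(0)=2<2\mu$) and trivially for $t<0$; at a putative first failure time $T$ every delayed argument $T-s$ lies in the already-controlled past, so integrating the differential inequality over $(T-s,T)$ gives $y(T-s)<e^{2\mu s}y(T)$, which feeds the optimized Young--Jensen estimate above and produces $|\dot y(T)|<2\mu\,y(T)$, a contradiction; the bounds \eqref{fb} then follow by integrating $|\dot y/y|<2\mu$ over $(t-s,t)$. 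If you reorient your bootstrap this way and replace the symmetric split by the optimized one, you recover the paper's argument; as written, both steps fail.
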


\begin{proof}
We have
\[
   \left| \frac{\dot y(0+)}{y(0)} \right| = 2 \left| \frac{\dot u(0+)}{u(0)} \right|
      = 2 \left| \frac{F_P[u](0)}{u(0)} \right| = 2 < 2\mu.
\]
Due to the continuity of $\dot y(t)$ for $t>0$, there exists $T>0$ such that
\(  \label{proof:y1}
   \left| \frac{\dot{y}(t)}{y(t)} \right| < 2\mu \qquad\mbox{for } t < T.
\)

We claim that \eqref{proof:y1} holds for all $t\in\R$, i.e., $T=+\infty$.
For contradiction, assume that $T<+\infty$, then again by continuity we have
\( \label{proof:y2}
   |\dot{y}(T)| = 2\mu y(T).
\)
Integrating \eqref{proof:y1} on the time interval $(T-s,T)$ with $s>0$ yields
\(  \label{est_temp}
      y(T-s) < e^{2\mu s} y(T).
\)
Consequently,
\(  \label{proof:y3}
   F_P[y](T) = \int_0^\infty y(T-s) \d P(s) < y(T) \int_0^\infty e^{2\mu s} \d P(s) = y(T) \M_P(2\mu).
\)
Using the Young inequality with some $\eps>0$, we have
\[
   |\dot y(T)| = |u(T) F_P[u](T)| 
      \leq \frac{\eps}{2} u(T)^2 + \frac{1}{2\eps} \bigl( F_P[u](T) \bigr)^2
\]
and with Jensen inequality
\[
   \frac{1}{2\eps} \bigl( F_P[u](T) \bigr)^2 =
       \frac{1}{2\eps} \left( \int_0^\infty u(T-s) \d P(s) \right)^2 \leq
        \frac{1}{2\eps}  \int_0^\infty u(T-s)^2 \d P(s)
        = \frac{1}{\eps} F_P[y](T).
\]
Consequently, with \eqref{proof:y3} we arrive at
\[
    |\dot y(T)| \leq  y(T) \left( \eps + \frac{1}{\eps} \M_P(2\mu) \right)
\]
Optimization in $\eps>0$ gives $\eps:= \sqrt{\M_P(2\mu)}$, so that we have
\[
   |\dot y(T)| < 2y(T) \sqrt{\M_P(2\mu)},
\]
and, with assumption \eqref{ass:mu1} we finally arrive at
\[
   |\dot y(T)| < 2 \mu y(T),
\]
a contradiction to \eqref{proof:y2}.
Consequently, \eqref{proof:y1} holds with $T:=\infty$,
and an integration on the interval $(t-s,t)$ implies \eqref{fb}.
\end{proof}

\begin{lemma} \label{lem:decay}
If assumptions \eqref{ass:mu1} and \eqref{ass:mu2} are verified for some $\mu>1$,
then we have, along the solutions of \eqref{eq:main},
\(   \label{eq:L2}
   \dot y(t) < 2\left( \frac{\M_P(\mu)\bigl(\M_P(\mu) - 1 \bigr)}{\mu} - 1 \right) y(t)
\)
for all $t>0$.
\end{lemma}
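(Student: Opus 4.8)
The plan is to differentiate $y$ directly and exploit the feedback structure of \eqref{eq:main}. Since $y=u^2/2$, along solutions we have $\dot y(t)=u(t)\dot u(t)=-u(t)\,F_P[u](t)$. The starting point is the elementary identity $u(t-s)=u(t)-\int_{t-s}^t \dot u(r)\,\d r$, which, after integrating against $\d P(s)$ and using the normalization $\int_0^\infty \d P=1$, gives
\[
   F_P[u](t) = u(t) - \int_0^\infty \int_{t-s}^t \dot u(r)\,\d r\,\d P(s).
\]
Substituting this back produces the clean splitting $\dot y(t) = -u(t)^2 + R(t)$, where the leading term $-u(t)^2=-2y(t)$ already supplies the $-1$ in the asserted rate, and $R(t):=u(t)\int_0^\infty \int_{t-s}^t \dot u(r)\,\d r\,\d P(s)$ is a correction I must control.

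The heart of the argument is a two-fold use of Lemma \ref{lem:delayEst}. Taking square roots in \eqref{fb} yields the amplitude bound $|u(t-s)| < e^{\mu s}|u(t)|$ for every $s>0$, and I would apply it in two directions. First, bounding the feedback term pointwise gives $|\dot u(r)| \le \int_0^\infty |u(r-\sigma)|\,\d P(\sigma) < |u(r)|\,\M_P(\mu)$. Second, for $r\in(t-s,t)$ the same amplitude bound gives $|u(r)| < e^{\mu(t-r)}|u(t)|$. Chaining these estimates controls $|\dot u(r)|$ on the entire integration window in terms of $|u(t)|$ alone.

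Assembling the bound on $R(t)$ is then routine. Estimating $|R(t)| \le |u(t)|\int_0^\infty \int_{t-s}^t |\dot u(r)|\,\d r\,\d P(s)$ and inserting the chained bound reduces everything to the elementary integral $\int_{t-s}^t e^{\mu(t-r)}\,\d r = (e^{\mu s}-1)/\mu$, followed by $\int_0^\infty (e^{\mu s}-1)\,\d P(s) = \M_P(\mu)-1$. This gives $|R(t)| < \frac{\M_P(\mu)(\M_P(\mu)-1)}{\mu}\,u(t)^2 = 2\frac{\M_P(\mu)(\M_P(\mu)-1)}{\mu}\,y(t)$, and combining with the leading term yields exactly \eqref{eq:L2}. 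I would emphasize that the differential inequality itself is driven only by \eqref{ass:mu1} through Lemma \ref{lem:delayEst}; assumption \eqref{ass:mu2} enters only to ensure that the prefactor $\frac{\M_P(\mu)(\M_P(\mu)-1)}{\mu}-1$ is strictly negative, so that \eqref{eq:L2} genuinely expresses decay.

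The main obstacle I anticipate is bookkeeping rather than conceptual: one must correctly orient the two instances of the amplitude bound (that of $u(t-s)$ against $u(t)$, and that of $u(r)$ with $r<t$ against $u(t)$), keep the dummy variables $s$, $r$, $\sigma$ distinct, and verify that the strictness in \eqref{fb} propagates to a strict inequality in \eqref{eq:L2}. The one point demanding genuine care is that the estimate $|\dot u(r)| < \M_P(\mu)|u(r)|$ must hold uniformly for every $r$ in the window $(t-s,t)$, which is precisely why Lemma \ref{lem:delayEst} is formulated for all $t>0$ and $s>0$ rather than at a single instant.
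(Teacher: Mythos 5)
Your proposal is correct and follows essentially the same route as the paper: your splitting $\dot y = -u^2 + R$ with $R = u\,(u-F_P[u])$ is exactly the paper's decomposition \eqref{est_dot_y}, and your chained estimates (bounding $|\dot u(r)|$ by $\M_P(\mu)|u(r)|$ and then $|u(r)|$ by $e^{\mu(t-r)}|u(t)|$ via the amplitude form of Lemma \ref{lem:delayEst}) reproduce the paper's two applications of that lemma, including the same elementary integrals. Your closing remark that \eqref{ass:mu2} is not needed for the inequality itself, only for negativity of the rate, is also accurate.
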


\begin{proof}
For $t>0$ we have
\(  \label{est_dot_y}
   \dot y = - u F_P[u] = (u-F_P[u]) u - u^2 \leq |u-F_P[u]| |u| - u^2,
\)
and
\(   \label{u-Fu}
   |u(t)-F_P[u](t)| \leq \int_0^\infty |u(t) - u(t-s)| \d P(s).
\)
With \eqref{eq:main} we have
\[
   |u(t) - u(t-s)| \leq \int_{t-s}^t |\dot u(\sigma)| \d\sigma \leq \int_{t-s}^t |F_P[u](\sigma)| \d\sigma,
\]
and with Lemma \ref{lem:delayEst},
\[
   |F_P[u](\sigma)| \leq \int_0^\infty |u(\sigma-\theta)| \d P(\theta)
     < |u(\sigma)| \int_0^\infty e^{\mu\theta} \d P(\theta)
     = |u(\sigma)| \M_P(\mu).
\]
Using Lemma \ref{lem:delayEst} 
again, we obtain
\[
   \int_{t-s}^t |F_P[u](\sigma)| \d\sigma &<&  \M_P(\mu) \int_{t-s}^t |u(\sigma)| \d\sigma \\
   &<& \M_P(\mu) |u(t)| \int_0^s e^{\mu \theta} \d\theta
   = \frac{\M_P(\mu)}{\mu} \left(e^{\mu s} - 1 \right) |u(t)|,
\]
and inserting into \eqref{u-Fu},
\[
   |u(t)-F_P[u](t)| < \frac{\M_P(\mu)\bigl(\M_P(\mu) - 1 \bigr)}{\mu} |u(t)|.
\]
Using this in \eqref{est_dot_y} gives
\[
   \dot y(t) < \left( \frac{\M_P(\mu)\bigl(\M_P(\mu) - 1 \bigr)}{\mu} - 1 \right) u^2(t)
\]
and \eqref{eq:L2} follows.
\end{proof}

The statement of Theorem \ref{thm:main} follows directly from the above two Lemmata.
Let us note that the above proofs apply without modification also to the case
of global solutions, where \eqref{eq:main} holds on the whole real line.

\section{Application to generic distributions} \label{sec:ex}
We show the application of Theorem \ref{thm:main} to the Dirac delta and
exponential distribution, where it provides explicit conditions for monotone
decay of the solution.

\subsection{Dirac delta.}\label{subsec:Dirac}
We choose $P(s) = \delta(s-\tau)$ for a fixed $\tau>0$.
Then $F_P[u](t) = u(t-\tau)$ and \eqref{eq:main} transforms to the negative feedback loop with constant delay,
\(   \label{eq:tau}
     \dot u(t) = - u(t-\tau).
\)
Delay negative feedback is arguably the simplest nontrivial delay differential equation.
Despite its simplicity, it exhibits a surprisingly rich qualitative dynamics, depending on the value of $\tau >0$.
An analysis of the corresponding characteristic equation
\[
   z + \tau e^{-z} = 0,
\]
where $z\in\mathbb{C}$, reveals that:
\begin{itemize}
\item
If $0 < \tau \leq e^{-1}$, then $u=0$ is asymptotically stable.
\item
If $e^{-1} < \tau < \pi/2$, then $u=0$ is asymptotically stable,
but every nontrivial solution of \eqref{eq:tau} is oscillatory.
\item
If $\tau=\pi/2$, then periodic solutions exist.
\item
If $\tau > \pi/2$, then $u=0$ is unstable.
\end{itemize}
In fact, if $\tau \leq e^{-1}$, the solutions subject to the constant initial datum
do not oscillate and tend monotonically to zero as $t\to\infty$.
If $\tau$ becomes larger than $e^{-1}$ but
smaller than $\pi/2$, the nontrivial solutions must oscillate
(i.e., change sign infinitely many times as $t\to\infty$),
but the oscillations are damped and vanish as $t\to\infty$.
Finally, for $\tau > \pi/2$ the nontrivial solutions oscillate with unbounded amplitude
as $t\to\infty$.
We refer to Chapter 2 of \cite{Smith} and \cite{Gyori-Ladas} for details.

With $P(s) = \delta(s-\tau)$ we readily have $\M_P[\mu] = e^{\mu\tau}$ and condition \eqref{ass:mu1} reads
\[
   e^{\mu\tau} \leq \mu.
\]
A simple analysis reveals that this condition is satisfiable if and only if $\tau\leq e^{-1}$ and
that for $\tau=e^{-1}$ we have to choose $\mu=e$.
However, condition \eqref{ass:mu2} is more restrictive, since for $\tau=e^{-1}$, $\mu=e$
it reads $e(e-1)<e$ and clearly is not satisfied.
Since the left-hand side of \eqref{ass:mu2} grows exponentially in $\mu\tau$,
we are motivated to pick $\mu$ as the solution of $\mu=e^{\mu\tau}$.
Inserting into \eqref{ass:mu2} gives then
\[
   \mu \left(\mu -1 \right) < \mu,
\]
i.e., $\mu<2$. Going back to \eqref{ass:mu1} with $\mu=2$, we obtain
the critical value of $\tau=\ln\sqrt{2} \approx 0.3466$.
This is less optimal than the condition $\tau<e^{-1}\approx 0.3679$ by factor of approx. $0.94$.
However, let us note that with a slight modification of the proof of Lemma \ref{lem:decay} we
can obtain monotone decay of the solution to zero
for all $\tau \leq e^{-1}$.
Indeed, we replace \eqref{est_dot_y} by 
\(  \label{for difference}
   \dot y = - uF_P[u] 
     \leq |u-F_P[u]| |F_P[u]| - F_P[u]^2,
\)
and, restricting to $t>\tau$,
\(   \label{u-wtu}
   |u(t)-F_P[u](t)| = |u(t)-u(t-\tau)| &\leq& \int_{t-\tau}^t |\dot u(s)| \d s \\
      &\leq& \int_{t-\tau}^t |u(s-\tau)| \d s = \int_{-\tau}^0 |u(t+s-\tau)| \d s.
\)
Using Lemma \ref{lem:delayEst} with $\mu:=e$ we obtain $|u(t-\tau+s)| < e^{-e s} |u(t-\tau)|$ for $s\in(-\tau,0)$, which gives
\[
   |u(t)-F_P[u](t)| < |u(t-\tau)| \int_{0}^\tau e^{e s} \d s
    = \frac{e^{e\tau}-1}{e} |u(t-\tau)|.
\]
Therefore, recalling that $F_P[u](t) = u(t-\tau)$, we have for $t>\tau$,
\(   \label{almost there}
   \dot y < \left( \frac{e^{e\tau}-1}{e} - 1 \right) F_P[u]^2.
\)
Consequently, for $\tau \leq e^{-1}$ we readily have $\dot y < - e^{-1}F_P[u]^2 \leq 0$ for all $t>\tau$,
and we conclude that $y=y(t)$, and thus $u=u(t)$, tend monotonically to zero as $t\to\infty$.
Let us point out that this result is sharp since we know that if $\tau > e^{-1}$,
the solution must oscillate \cite{Gyori-Ladas}.

\subsection{Gamma distribution.}\label{subsec:Gamma}
For the Gamma distribution $\d P(s) = \lambda^k s^{k-1} e^{-\lambda s}/\Gamma(k)$
with shape parameter $k >0$ and rate parameter $\lambda>0$ we have for $\mu<\lambda$,
\[
   \M_P[\mu] = \left( \frac{\lambda}{\lambda-\mu} \right)^k.
\]
Condition \eqref{ass:mu1} reads
\[
   \left( \frac{\lambda}{\lambda-2\mu} \right)^k \leq \mu^2
\]
and is satisfiable if and only if
\(   \label{k-cond}
   \lambda \geq \frac{(k+2)^\frac{k+2}{2}}{k^\frac{k}{2}}.
\)
with $\mu := \frac{\lambda}{k+2}$.
Inserting this value into condition \eqref{ass:mu2}, we obtain
\[
   \frac{[k(k+2)]^\frac{k}{2}}{(k+1)^k} \left( \left(\frac{k+2}{k+1}\right)^k -1 \right) < 1,
\]
and an inspection reveals that this is satisfied for all $k\leq 4$.
Consequently, \eqref{eq:main} is nonoscillatory (at least) for $k\leq 4$ if $\lambda$ satisfies \eqref{k-cond}.

Let us point out the special case of $k=1$, which corresponds to the exponential distribution.
Condition \eqref{k-cond} gives here $\lambda \geq 3^\frac{3}{2} \approx 5.196$,
while the optimal condition for nonoscillation is $\lambda \geq 4$,
see \cite{Berezansky-Braverman}.

\subsection{Uniform distribution}\label{subsec:Uniform}
For $\d P(s) = \frac{1}{b-a}\chi_{[a,b]}(s) \d s$ with $0\leq a<b$ we have
\[
   \M_P[\mu] = \frac{ e^{\mu b} - e^{\mu a}}{(b-a)\mu}.
\]
Combining the rough estimate
\[
   e^{\mu a} \leq \M_P[\mu] \leq e^{\mu b}
\]
with the results of Section \ref{subsec:Dirac}
implies, as expected, that the solution is nonoscillatory whenever $b<\ln\sqrt{2}$.
On the other hand, conditions \eqref{ass:mu1}, \eqref{ass:mu2} cannot be satisfied if $a>\ln\sqrt{2}\approx 0.3466$.
We resolved the conditions \eqref{ass:mu1}, \eqref{ass:mu2} numerically,
using the matlab routine {\tt fminbnd}. The resulting critical curve is plotted
in Fig. \ref{fig:uniform}. The upper limit on $a$ is approx. $0.346$, in agreement with the analytical result.
On the other hand, for values of $a$ close to zero, the interval length $b-a$ can up to approx. $0.59$.

\begin{figure}
\centerline{
\includegraphics[width=0.6\columnwidth]{./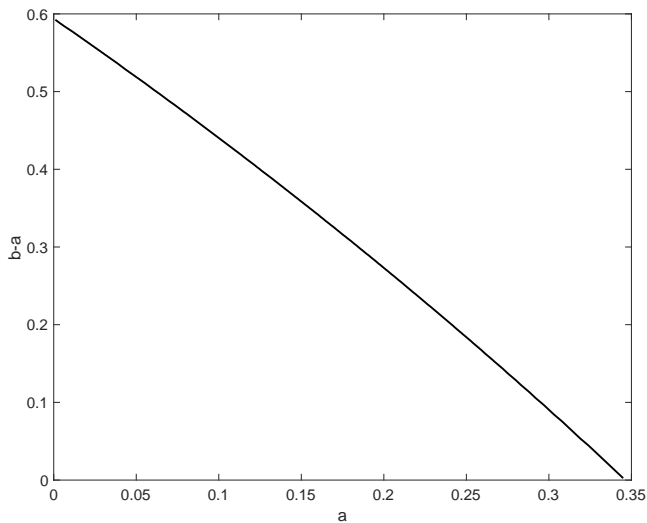}}
\caption{Numerical calculation of the critical value of the interval length $b-a$
as a function of the parameter $a$ for the uniform distribution on $[a,b]$.
}
\label{fig:uniform}
\end{figure}

\subsection{Truncated Gaussian distribution.}\label{subsec:truncGauss}
For the truncated normal distribution on $(0,\infty)$ with parameters $m\in\R$ and $\sigma>0$ we have
\(  \label{truncN}
   \d P(s) = \frac{1}{\sqrt{2\pi\sigma^2}} \frac{\exp\left(-\frac12 \left(\frac{s-m}{\sigma}\right)^2 \right)}{\Phi\left(\frac{m}{\sigma}\right)} \d s,
\)
and
\[
   \M_P[\mu] = \frac{\Phi\left(\frac{m}{\sigma}+\sigma \mu\right)}{\Phi\left(\frac{m}{\sigma}\right)}  \exp\left({m\mu + \frac{\sigma^2 \mu^2}{2}}\right) .
\]
Since $\M_P[\mu] \geq e^{m\mu}$, conditions \eqref{ass:mu1}, \eqref{ass:mu2} can only be satisfied if $m\leq \ln\sqrt{2}$.
Obviously, as $m\to -\infty$, the critical value of $\sigma$ tends to $+\infty.$
We resolved the conditions \eqref{ass:mu1}, \eqref{ass:mu2} numerically for $\mu\in [-\ln\sqrt{2}, \ln\sqrt{2}]$, using the matlab procedure {\tt fminbnd}.
The result is shown in Fig. \ref{fig:truncGauss}, where we plot the critical value of $\sigma$
as a function of the parameter $m$.

\begin{figure}
\centerline{
\includegraphics[width=0.6\columnwidth]{./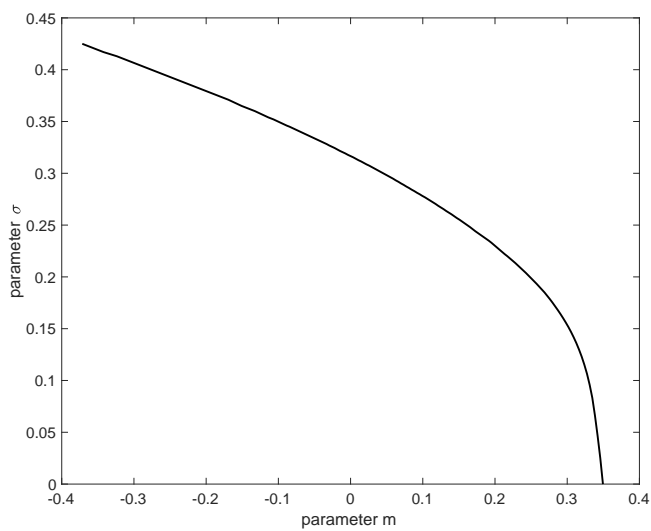}}
\caption{Numerical calculation of the critical value of the parameter $\sigma$
as a function of the parameter $m$ for the truncated normal distribution \eqref{truncN}.
}
\label{fig:truncGauss}
\end{figure}

\section*{Acknowledgment}
JH acknowledges the support of the KAUST baseline funds.
This work was done partially while the author was visiting the Institute for Mathematical Sciences, National University of Singapore in 2019.
The visit was supported by the Institute.

\vspace{7mm}

\end{document}